\DeclareMathOperator{\diam}{diam}
\DeclareMathOperator{\Lip}{Lip}
\newcommand{\ind}{{\bf 1}}
\newcommand{\Nset}{\mathbb{N}}
\newcommand{\Rset}{\mathbb{R}}
\newcommand{\Prob}{\mathbb{P}}
\newcommand{\Cplusa}{C^+_\beta}
\newcommand{\PMalpha}{\mathcal{N}_{M,\alpha}}
\def\Nma{\PMalpha}
\newcommand{\FF}{\mathcal{F}}
\newcommand{\BB}{\mathcal{B}}
\newcommand{\MM}{\mathcal{M}}
\newcommand{\FM}[1]{\|#1\|_{\text{\rm FM}}}
\newcommand{\wstawka}[1]{\;\;\text{#1}\;\;}
\newcommand{\muprim}{\mu}
\newcommand{\deltaSp}{\delta_{(\Gamma,p)}}%
\theoremstyle{plain}
\newtheorem{theorem}{Theorem}
\newtheorem{lemma}[theorem]{Lemma}
\theoremstyle{definition}
\newtheorem{defn}{Definition}
\newtheorem{remark}[theorem]{Remark}
\begin{document}

\author{Wojciech Czernous}
\address[Wojciech Czernous]{University of Gdańsk, 
Al. Marszałka Piłsudskiego 46, 81-378 Gdynia, Poland}
\email[Corresponding author]{wojciech.czernous@ug.edu.pl}

\author{Tomasz Szarek}
\address[Tomasz Szarek]{Institute of Mathematics, University of Gdańsk, Wita Stwosza 57, 80-952 Gdańsk, Poland}
\email{szarek@intertele.pl}
\thanks{Tomasz Szarek was supported by the Polish
NCN grant 2016/21/B/ST1/00033}

\title[Generic homeomorphic IFSs]{Generic invariant measures for iterated systems of interval homeomorphisms}
\date{}

\begin{abstract}
It is known that Iterated Function Systems generated by orientation preserving homeomorphisms of the unit interval admit a unique invariant measure on $(0,1)$. The setup for this result is the positivity of Lyapunov exponents at both fixed points and the minimality of the induced action. With the additional requirement of continuous differentiability of maps on a fixed neighborhood of $\{0,1\}$, we present a metric in the space of such systems, which renders it complete. Using then a classical argument (and an alternative uniqueness proof), we show that almost singular invariant measures are admitted by systems lying densely in the space. This allows us to construct a residual set of systems with unique singular stationary distribution. Dichotomy between singular and absolutely continuous unique measures, is assured by taking a subspace of systems with absolutely continuous maps; the closure of this subspace is where the residual set is found.
\end{abstract}

\subjclass{37E05; 60G30, 37C20}
\keywords{Markov operators, semigroups of interval homeomorphisms, absolute continuity, singularity, minimal actions}

\dedicatory{To the memory of Professor Józef Myjak}

\maketitle

\section{Introduction}

Ergodic properties of random dynamical systems have been extensively studied for many years (see \cite{Arnold1998} and the references therein). The most important concepts related to these properties are  attractors and invariant measures (see \cite{Barnsley, CF, MD, S}). Among random systems, Iterated Function Systems and related  skew products are studied (see \cite{AM, AM1, BM}).
The note is concerned with Iterated Function Systems generated by orientation preserving homeomorphisms on the interval $[0, 1]$. It was proved (see \cite{Homburg2016}) that under quite general conditions the system
has a unique invariant measure  on the open interval $(0, 1)$. (Clearly, such systems have also trivial invariant measures supported on the endpoints.) 
It is well known that this measure is either absolutely continuous or singular with respect to Lebesgue measure.The methods that would allow us to distinguish each of these cases are still unknown. The main purpose of our paper is to support the conjecture that typically it should be singular (see \cite{AM}). In fact we prove that the set of Iterated Functions Systems that have a unique invariant measure on the interval $(0, 1)$ which is singular is generic in the natural class of systems.

\section{Notation}
By $\MM_1$ and $\MM_{fin}$ we denote the set of all probability measures and all finite measures
on the $\sigma$-algebra of Borel sets $\mathcal{B}([0,1])$, respectively. By $C([0,1])$ we denote the family of
all continuous functions equipped with the supremum norm $\|\cdot\|$. 

An operator $P:\MM_{fin}\to\MM_{fin}$ is called a {\it Markov operator} if it satisfies the following
two conditions:
\begin{enumerate}
	\item[1)] positive linearity: $P(\lambda_1\mu_1+\lambda_2\mu_2)=\lambda_1 P\mu_1+\lambda_2 P\mu_2$
	for $\lambda_1$, $\lambda_2\ge0$; $\mu_1$, $\mu_2\in\MM_{fin}$;
	\item [2)] preservation of  measure: $P\mu([0,1])=\mu([0,1])$ for $\mu\in\MM_{fin}$.
\end{enumerate}
A Markov operator $P$ is called a {\it Feller operator} if there is a linear operator $P^*:C([0,1])\to C([0,1])$
(dual to $P$) such that
\[
\int_{[0,1]} P^* f(x)\mu(dx) = \int_{[0,1]} f(x) P\mu(dx)
\wstawka{for}
f\in C([0,1]),
\mu \in\MM_{fin}.
\]
Note that, if such an operator exists, then $P^*(\ind)=\ind$. Moreover, $P^*(f)\ge 0$ if $f\ge 0$ and
\[
\| P^*(f) \|
\le \| P^*(|f|) \|
\le \| f \|,
\]
so $P^*$ is a continuous operator. A measure $\mu_*$ is called {\it invariant} if $P\mu_*=\mu_*$.

Let $\Gamma=\{g_1,\ldots,g_k\}$ be a finite collection of nondecreasing absolutely continuous functions,
mapping $[0,1]$ onto $[0,1]$, and let $p=(p_1,\ldots,p_k)$ be a probabilistic vector.
Clearly, it defines a probability distribution $p$ on $\Gamma$,
by putting $p(g_i)=p_i$.
Put $\Sigma_n=\{1,\ldots,k\}^n$, and let $\Sigma_*=\bigcup_{n=1}^\infty\Sigma_n$ 
be the collection of all finite words with entries from $\{1,\ldots,k\}$.
For a sequence $\omega\in\Sigma_*$, $\omega=(i_1,\ldots,i_n)$
we denote by $|\omega|$ its length (equal to $n$). 
Finally, let $\Omega=\{1,\ldots,k\}^{\Nset}$.
For $\omega=(i_1,i_2,\ldots)\in\Omega$ and $n\in\Nset$ 
we set $\omega|_n=(i_1,i_2,\ldots,i_n)$.
Let $\mathbb{P}$ be the product measure distribution on
$\Omega$ generated by the initial distribution on $\{1,\ldots,k\}$.
To any $\omega=(i_1,\ldots,i_n)\in\Sigma_{*}$, there corresponds a composition
$
g_\omega
=g_{i_n,i_{n-1},\ldots,i_1}
=g_{i_n}\circ g_{i_{n-1}}\circ\cdots\circ g_{i_1}
$.
The pair $(\Gamma,p)$, called an {\it Iterated Function System},
generates a Markov operator $P:\MM_{fin}\to\MM_{fin}$ of the form
\[
P\mu = \sum_{i=1}^{k} p_i\, g_i\mu,
\]
where $g_i\mu(A)=\mu(g_i^{-1}(A))$ for $A\in\BB([0,1])$; which describes the evolution of distribution due to 
action of randomly chosen maps from the collection $\Gamma$. It is a Feller operator,
that is, its dual operator $P^*$, given by the formula 
\[
P^* f(x) = \sum_{i=1}^{k} p_i f( g_i(x))
\quad \wstawka{for} 
f\in C([0,1]),
\;
x\in[0,1],
\]
has the property $P^*(C[0,1]) \subset C([0,1])$.

\begin{defn}\label{def1}
	We denote by $\Cplusa$, $\beta>0$, the space of continuous functions $g:[0,1]\to[0,1]$ satisfying the following properties:
	\begin{enumerate}
		\item[(1)] $g$ is nondecreasing, $g(0)=0$ and $g(1)=1$,
		\item[(2)] $g$ is continuously differentiable 
		on $[0,\beta]$ and $[1-\beta,1]$.
	\end{enumerate}
\end{defn}
We introduce the space $\FF_0$ of pairs $(\Gamma,p)$ such that 
$\Gamma=\{g_1,\ldots,g_k\}\subset\Cplusa$ is a finite collection of homeomorphisms,
and $p=(p_1,\ldots,p_k)$ is a probabilistic vector.
We endow this space with the metric
\[
d((\Gamma,p),(\Delta,q)) = \sum_{i=1}^{k}
\Big(
|p_i-q_i| 
+ \|g_i-h_i\|
+ \|g_i^{-1}-h_i^{-1}\|
+ \sup_{[0,\beta]\cup[1-\beta,1]}|g_i'-h_i'|
\Big),
\]
where $\Delta=\{h_1,\ldots,h_k\}$ and $q=(q_1,\ldots,q_k)$.
It is easy to check that $(\FF_0,d)$ is a complete metric space.

\begin{defn}[Admissible semigroups]\label{def2}
	Let $\Gamma=\{g_1,\ldots,g_k\}\subset\Cplusa$ be a finite collection of homeomorphisms,
	and let $p=(p_1,\ldots,p_k)$ be a probabilistic vector such that $p_i>0$ for all $i=1,\ldots,k$.
	The pair $(\Gamma,p)$ is called an {\it admissible Iterated Function System} if
	\begin{enumerate}
		\item[(1)] for any $x\in(0,1)$ the exist $i,j\in\{1,\ldots,k\}$ such that $g_i(x)<x<g_j(x)$;
		\item[(2)] the Lyapunov exponents at both common fixed points $0$, $1$ are positive, i.e.,
		\[
		\sum_{i=1}^{k}p_i\log g_i'(0) > 0
		\wstawka{and}
		\sum_{i=1}^{k}p_i\log g_i'(1) > 0.
		\]
	\end{enumerate}
\end{defn}

We consider the space $\FF\subset\FF_0$ of admissible Iterated Function Systems
$(\Gamma,p)$ with all maps $g\in\Gamma$ absolutely continuous.
The closure $\overline{\FF}$ of $\FF$ in $(\FF_0,d)$ is clearly complete.
Recall that a subset of a complete metric space is called \emph{residual} if its 
complement is a set of the first category.

Let $\MM_1(0,1)$ denote the space of $\mu\in\MM_1$
supported in the open interval $(0,1)$, i.e., satisfying $\mu((0,1))=1$.
Clearly, each $(\Gamma,p)\in\FF$ has two invariant measures: $\nu_1=\delta_0$ and $\nu_2=\delta_1$.
As we will show, it admits also a unique invariant measure in $\MM_1(0,1)$.

Since $P\mu$ is singular for singular $\mu\in\MM_{fin}$,
a unique invariant measure is either singular or absolutely continuous (see \cite{LasotaMyjak1994}).
The aim of this paper is to show that the set of all $(\Gamma,p)\in\overline{\FF}$, which have singular unique invariant measure
$\mu\in\MM_1(0,1)$, is residual in $\overline{\FF}$.

\section{Invariant measure}

The paper \cite{Homburg2016} deals with a special case of an IFS, namely for $k=2$,
with $g_1(x)<x$, $g_2(x)>x$, on $(0,1)$, being twice continuously differentiable.
The proof of \cite[Lemma~3.2]{Homburg2016}, gives the following result.
\begin{lemma}\label{lem01bis}
	Let $(\Gamma,p)$ be an admissible Iterated Function System.
	Then there exists an ergodic invariant measure $\mu\in\MM_1(0,1)$.
\end{lemma}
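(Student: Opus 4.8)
The plan is to produce an invariant probability measure that does not charge the endpoints by exhibiting a Lyapunov (Foster) function contracted in expectation near $0$ and $1$, and then to extract an ergodic component. For a small parameter $\gamma>0$, set $V(x)=x^{-\gamma}+(1-x)^{-\gamma}$; this is continuous on $(0,1)$, bounded below by $2$, and blows up at both endpoints. Because every $g_i\in\Cplusa$ is $C^1$ near the fixed points with $g_i(0)=0$, $g_i(1)=1$, a Taylor expansion gives $g_i(x)=g_i'(0)x+o(x)$ as $x\to0^+$ (and positivity of the exponent forces $g_i'(0)\in(0,\infty)$). Hence $P^*V(x)/V(x)\to\phi_0(\gamma):=\sum_{i=1}^k p_i\,g_i'(0)^{-\gamma}$ as $x\to0^+$, and symmetrically $P^*V(x)/V(x)\to\phi_1(\gamma):=\sum_{i=1}^k p_i\,g_i'(1)^{-\gamma}$ as $x\to1^-$.

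Next I would exploit condition (2) to tune $\gamma$. Since $\phi_0(0)=\phi_1(0)=1$ while $\phi_0'(0)=-\sum_i p_i\log g_i'(0)<0$ and $\phi_1'(0)=-\sum_i p_i\log g_i'(1)<0$, one may fix $\gamma>0$ so small that $\rho:=\max\{\phi_0(\gamma),\phi_1(\gamma)\}<1$. Choosing $\rho'\in(\rho,1)$ and a correspondingly small $\epsilon\in(0,\beta)$, the pointwise limits yield $P^*V\le\rho'V$ on $(0,\epsilon)\cup(1-\epsilon,1)$. On the compact set $K=[\epsilon,1-\epsilon]$ both $V$ and $P^*V$ are bounded (each $g_i(K)$ is a compact subset of $(0,1)$, so $V\circ g_i$ is bounded on $K$), so with $b:=\sup_K P^*V<\infty$ one obtains the global drift inequality $P^*V\le\rho'V+b\,\ind_K$ on $(0,1)$.

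I would then run the Krylov--Bogolyubov construction controlled by this drift. Fix $x_0\in(0,1)$ and form $\mu_n=\tfrac1n\sum_{j=0}^{n-1}P^j\delta_{x_0}$; these lie in $\MM_1(0,1)$ since every $g_i$ preserves $(0,1)$. Using $\ind_K\le\ind$ and $P^*\ind=\ind$, the drift inequality iterates to $P^{*n}V\le(\rho')^nV+b/(1-\rho')$, whence, by duality and monotone approximation of $V$ by bounded continuous truncations, $\sup_n\int V\,d\mu_n\le (V(x_0)+b)/(1-\rho')=:M<\infty$. As $\{V\le R\}$ is a compact subset of $(0,1)$ and $\mu_n(\{V>R\})\le M/R$ by Markov's inequality, the family $\{\mu_n\}$ is tight inside the open interval; by Prokhorov a subsequence converges weakly-$*$ to some $\mu$, which is $P$-invariant by the standard Feller argument and satisfies $\mu(\{0,1\})=0$, i.e. $\mu\in\MM_1(0,1)$.

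Finally I would pass to an ergodic component. The set $\{0,1\}$ is invariant under every $g_i$, so the only ergodic measures charging it are $\delta_0$ and $\delta_1$; from the ergodic decomposition $\mu=\int e\,d\Pi(e)$ and $\mu(\{0,1\})=0$ one gets $e(\{0,1\})=0$ for $\Pi$-a.e.\ $e$, and since $\mu((0,1))=1$ at least one ergodic component $\mu_*\in\MM_1(0,1)$ occurs. This $\mu_*$ is the required ergodic invariant measure. The heart of the argument, and the only place the hypotheses genuinely enter, is the drift estimate near the endpoints: one must pick $\gamma$ small enough that the derivative data $g_i'(0),g_i'(1)$ force a true contraction $\rho<1$, and control the $o(x)$ error uniformly in $i$ so that the pointwise ratio limits become an inequality on a full one-sided neighborhood. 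The remaining ingredients (tightness, Krylov--Bogolyubov, and the ergodic decomposition) are routine.
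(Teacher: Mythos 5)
Your proof is correct, and it reaches the lemma by a route that is formally different from, though morally equivalent to, the paper's. The paper (following Gharaei--Homburg) works with the convex, weak-$*$ closed set $\PMalpha=\{\mu:\mu([0,x])\le Mx^\alpha,\ \mu([1-x,1])\le Mx^\alpha\}$ and shows in Lemma \ref{lem01} that $P$ preserves it; the key computation is that $F(\alpha)=\sum_i p_i\lambda_i^{-\alpha}<1$ for small $\alpha$ because $F(0)=1$ and $F'(0)=-\Lambda<0$. Krylov--Bogolyubov inside this set then yields an invariant measure charging no mass to the endpoints, and an ergodic one is extracted using the convexity of $\PMalpha$ (extreme points of a compact convex set of invariant measures). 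Your Foster--Lyapunov function $V(x)=x^{-\gamma}+(1-x)^{-\gamma}$ encodes the same information in moment form rather than tail form (the two are interchangeable via Markov's inequality and integration of tails), and your condition $\phi_0(\gamma),\phi_1(\gamma)<1$ is exactly the paper's $F(\alpha)<1$, obtained from the same first-order expansion of $\gamma\mapsto\sum_i p_ie^{-\gamma\log g_i'(0)}$. What your packaging buys is a self-contained drift inequality $P^*V\le\rho'V+b\,\ind_K$ with a clean iterated bound, tightness by Markov's inequality, and no need to guess the constants $M,\alpha$ in advance; the price is an appeal to the ergodic decomposition (Choquet) at the end, where the paper only needs an extreme-point argument. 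The paper's packaging, by contrast, produces the explicit invariant class $\PMalpha$ as a reusable object, which is essential later (Lemmas \ref{lem03} and \ref{lemFndense}) for uniform control over whole families of nearby systems; your moment bound would have to be made uniform in the system to serve the same purpose. Your two technical caveats are the right ones and both are harmless here: the $o(x)$ errors are uniform because $\Gamma$ is finite and each $g_i$ is $C^1$ on $[0,\beta]$ with $g_i'(0)>0$, and the duality $\int V\,dP^j\delta_{x_0}=P^{*j}V(x_0)$ for the unbounded $V$ is immediate since $P^j\delta_{x_0}$ is a finite convex combination of point masses in $(0,1)$.
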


	The main ingredient of the proof of Lemma \ref{lem01bis} is the following subset of $\MM_1(0,1)$:
	for small $0<\alpha<1$ and positive $M$, one defines
	\begin{equation}\label{eq:Nc}
	\PMalpha = 
	\{
		\mu\in\MM_1(0,1) : \mu([0,x]) \le Mx^\alpha\;\text{and}\; \mu( [1-x,1] ) \le Mx^\alpha
	\}.
	\end{equation}
	For each admissible IFS, thanks to the positivity of Lyapunov exponents,
	the corresponding Markov operator $P$ keeps an $\PMalpha$ invariant,
	for some $M$ and $\alpha$.
	Then, among the invariant measures for $P$, given by the Krylov-Bogolyubov procedure,
	one can find an ergodic one, due to the convexity of $\PMalpha$.
	
	For the later use, we give below an easy extension of the invariance part 
	of the  \cite[Lemma~3.2]{Homburg2016} and its proof.
\begin{lemma}\label{lem01}
	Let $(\Gamma,p)$ be an admissible Iterated Function System and let $P$ 
	be the Markov operator corresponding to $(\Gamma,p)$. There exists $M>0$ and $\alpha>0$
	such that $\PMalpha\neq\emptyset$ and 
	\[
	P(\PMalpha) \subset \PMalpha.
	\]
\end{lemma}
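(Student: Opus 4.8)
The plan is to exploit that a nondecreasing homeomorphism $g_i$ with $g_i(0)=0$ satisfies $g_i^{-1}([0,x])=[0,g_i^{-1}(x)]$, so that for any $\mu\in\PMalpha$, using $\mu([0,y])\le My^\alpha$,
\[
P\mu([0,x])=\sum_{i=1}^k p_i\,\mu([0,g_i^{-1}(x)])\le M x^\alpha\sum_{i=1}^k p_i\left(\frac{g_i^{-1}(x)}{x}\right)^{\!\alpha}.
\]
Hence the left-endpoint bound for $P\mu$ reduces to showing $\sum_i p_i(g_i^{-1}(x)/x)^\alpha\le 1$; the right endpoint is symmetric, using $g_i^{-1}([1-x,1])=[g_i^{-1}(1-x),1]$ and writing the latter as $[1-y_i,1]$ with $y_i=1-g_i^{-1}(1-x)$.

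The exponent $\alpha$ is dictated by the Lyapunov condition (2) of Definition \ref{def2}. Put $\phi_0(\alpha)=\sum_i p_i(g_i'(0))^{-\alpha}$ and $\phi_1(\alpha)=\sum_i p_i(g_i'(1))^{-\alpha}$. Both are smooth with $\phi_0(0)=\phi_1(0)=1$ and $\phi_0'(0)=-\sum_i p_i\log g_i'(0)<0$, $\phi_1'(0)=-\sum_i p_i\log g_i'(1)<0$. Therefore I can fix $\alpha>0$ so small that $\theta:=\max(\phi_0(\alpha),\phi_1(\alpha))<1$.

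To convert these limiting values into genuine inequalities near the endpoints, I invoke condition (2) of Definition \ref{def1}: each $g_i$ is $C^1$ on $[0,\beta]$ with $g_i'(0)>0$, whence $g_i^{-1}(x)/x\to(g_i'(0))^{-1}$ as $x\to 0^+$, and likewise $y_i/x\to(g_i'(1))^{-1}$. So there is $\delta\in(0,\min_i g_i(\beta))$ with $\sum_i p_i(g_i^{-1}(x)/x)^\alpha\le(1+\theta)/2<1$ for all $x\in(0,\delta)$, and an analogous $\delta'$ at the right endpoint; this gives the desired bounds for $x$ below these thresholds. For $x$ at least $\delta$ (resp.\ $\delta'$) the inequality is made trivial by requiring $Mx^\alpha\ge 1$, which holds once $M\ge\delta^{-\alpha}$ (resp.\ $M\ge(\delta')^{-\alpha}$), since always $P\mu([0,x])\le 1$. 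Taking in addition $M\ge 2^\alpha$ ensures $\delta_{1/2}\in\PMalpha$, so the set is nonempty; and letting $x\to 0$ in the two bounds shows that any $\mu\in\PMalpha$ carries no mass at $0$ or $1$, so $P\mu\in\MM_1(0,1)$ as required. Setting $M=\max(\delta^{-\alpha},(\delta')^{-\alpha},2^\alpha)$ closes all constraints.

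The delicate point is the patching between regimes: the Lyapunov mechanism only controls $x$ near the endpoints, while on the complementary range one must fall back on the crude bound $P\mu([0,x])\le 1$ and absorb it by enlarging $M$. The care lies in fixing $\alpha$ once and for all from both exponents, then selecting the thresholds $\delta,\delta'$ and a single constant $M$ so that the near-endpoint and trivial regimes overlap consistently at both ends simultaneously.
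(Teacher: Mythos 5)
Your proof is correct and follows essentially the same strategy as the paper's: choose $\alpha$ small enough that the ``moment generating function'' of the Lyapunov data at each endpoint drops below $1$ (the paper does this via a Taylor expansion of $F(\alpha)=\sum_i p_i e^{-\alpha\log\lambda_i}$ with $g_i^{-1}(x)\le x/\lambda_i$ near $0$, you via the limit $g_i^{-1}(x)/x\to (g_i'(0))^{-1}$ guaranteed by the $C^1$ hypothesis), then absorb the region away from the endpoints by enlarging $M$ so that $Mx^\alpha\ge 1$ there. The only differences are cosmetic (order of choosing $\alpha$ versus the linearization constants, and your explicit verification that $\PMalpha\neq\emptyset$, which the paper leaves implicit).
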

\begin{proof}
	Note that, by Definitions \ref{def1} and \ref{def2},
	we can find $x_0>0$ and $\lambda_1,\ldots,\lambda_k>0$ such that 
	\[
	\Lambda :=\sum_{i=1}^{k} p_i\log \lambda_i > 0
	\]
	and 
	\[
	g_i^{-1}(x) \le x/\lambda_i \quad\wstawka{for all} x\in[0,x_0].
	\]
	Let $F(\alpha) = \sum_i p_i e^{-\alpha \log\lambda_i}$.
	Writing the Taylor expansion at $0$ we obtain 
	\begin{multline*}
	F(\alpha) =\sum_i p_i (1-\alpha\log \lambda_i+\mathcal{O}(\alpha^2))
	=  1-\alpha\sum_i p_i \log \lambda_i +\mathcal{O}(\alpha^2)
	\\
	=  1-\alpha\Lambda +\mathcal{O}(\alpha^2) < 1
	\end{multline*}
	for $\alpha$ small enough; choose such $\alpha>0$. Choose also $M>0$ so large that 
	\[
	M x_0^\alpha \ge 1.
	\]
	Assume that 
	$x\in[0,x_0]$ 
	and let $\mu\in\PMalpha$. Then
	\begin{equation*}
	\begin{split}
	P\mu([0,x]) 
	& 
	=     \sum_i p_i\mu([0,g_i^{-1}(x)]) 
	\le   \sum_i p_i\mu([0,x/\lambda_i]) 
	\le M \sum_i p_i x^\alpha/\lambda_i^\alpha \\
	&
	=   Mx^\alpha \sum_i p_i e^{-\alpha\log\lambda_i}
	=   Mx^\alpha \cdot F(\alpha) 
	\le Mx^\alpha.
	\end{split}
	\end{equation*}
	On the other hand, if $x>x_0$, then 
	$
	P\mu([0,x]) \le 1 \le M x_0^\alpha \le M x^\alpha.
	$
	In the same way we prove that $P\mu([1-x,1])\le Mx^\alpha$ 
	(with possibly larger $M$ and smaller $\alpha$).
	This completes the proof.
\end{proof}

Using ideas from \cite[Lemma~3]{Ilyashenko2010}, one can prove the following
\begin{lemma}\label{fullsupp}
	Let $(\Gamma,p)$ be an admissible Iterated Function System.
	Assume that there exist $g_1,g_2\in\Gamma$ such that 
	the ratio $\ln g_1'(0)/\ln g_2'(0)$ is irrational
	and 
	$g_1'(0)>1>g_2'(0)$;
	let $g_1'$ be H\"older continuous in a neighborhood of zero.
	Then the IFS $(\Gamma,p)$ is minimal, that is, 
	$\{g_{i_n,i_{n-1},\ldots,i_1}(x)\}_n$ is dense in $[0,1]$  for each $x$ and $\omega$.
	\qed
\end{lemma}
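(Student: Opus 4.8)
The plan is to fix $x\in(0,1)$, let $K$ denote the closure of the forward orbit $\{g_\omega(x):\omega\in\Sigma_*\}$, and show $K=[0,1]$; this is exactly the asserted minimality. Observe first that $K$ is closed and forward invariant, $g_i(K)\subseteq K$ for every $i$, since each $g_i$ is continuous and carries the orbit into itself. I would then record, using only admissibility, that $\inf K=0$ and $\sup K=1$. Indeed, put $c=\inf K$ and suppose $c>0$; then $c\in(0,1)$, so by Definition~\ref{def2}(1) there is $g_i$ with $g_i(c)<c$. As $c=\inf K$ there are points $z\in K$ just above $c$, and by continuity $g_i(z)<c$ for such $z$, while $g_i(z)\in K$ by invariance — contradicting $c=\inf K$. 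Hence $\inf K=0$, and symmetrically $\sup K=1$; in particular the orbit accumulates at $0$.

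Next comes a soft bootstrap: if $[0,\delta]\subseteq K$ for some $\delta>0$, then $K=[0,1]$. Let $D=\sup\{\delta'\in[0,1]:[0,\delta']\subseteq K\}\ge\delta>0$; since $K$ is closed, $[0,D]\subseteq K$. If $D<1$, then by Definition~\ref{def2}(1) there is $g_j$ with $g_j(D)>D$, and since $g_j$ is an increasing homeomorphism fixing $0$ we get $g_j([0,D])=[0,g_j(D)]\subseteq K$, contradicting the maximality of $D$. Hence $D=1$ and $K=[0,1]$. Thus everything reduces to producing one such $\delta$, i.e.\ to local minimality near the fixed point $0$.

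This last point is the heart of the argument and the place where the irrationality and the regularity of $g_1$ enter. I would pass to the coordinate $t=-\ln x$, so that $x\to0$ corresponds to $t\to+\infty$ and the maps act near $0$ as approximate translations: with $a:=\ln g_1'(0)>0$ and $-|b|:=\ln g_2'(0)<0$, a word built from $m$ copies of $g_1$ and $n$ copies of $g_2$ displaces $t$ by $-ma+n|b|$ up to an error. The H\"older continuity of $g_1'$ is what makes this quantitative: it gives $\ln(g_1(x)/x)=a+\mathcal{O}(x^\theta)$ and, through the resulting bounded-distortion estimate for the iterates of the expanding map $g_1$, controls the accumulated error uniformly on a one-sided neighborhood of $0$. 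Because $\ln g_1'(0)/\ln g_2'(0)$ is irrational, the set $\{-ma+n|b|:m,n\ge0\}$ is dense in $\Rset$ by Weyl equidistribution of $\{n|b|\bmod a\}$; combined with the first step, which places orbit points arbitrarily deep in the neighborhood where the errors are negligible, this lets me steer the orbit to within $\varepsilon$ of any prescribed target in some $(0,\delta)$. Hence the orbit is dense in $(0,\delta)$, so $[0,\delta]\subseteq K$, and the bootstrap finishes the proof.

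The main obstacle is precisely turning the ``approximate translation'' heuristic into rigorous estimates when only forward compositions (a semigroup, not a group) are available, and when the orbit must be kept inside the region where the distortion control is valid. The strategy, which is exactly what is imported from \cite[Lemma~3]{Ilyashenko2010}, is to use $g_2$ first to drive the point very close to $0$, where the nonlinearity is negligible, and only then to perform the fine positioning, choosing $m,n$ from the equidistribution argument so that both the net displacement and the intermediate excursions remain under control.
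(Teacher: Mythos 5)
The paper does not actually write out a proof of Lemma~\ref{fullsupp}; it only points to \cite[Lemma~3]{Ilyashenko2010}, so there is no argument of the authors' to compare yours against step by step. Your global reduction is correct and is genuine added value: the orbit closure $K$ of $x\in(0,1)$ is forward invariant, condition (1) of Definition~\ref{def2} forces $\inf K=0$ and $\sup K=1$, and the bootstrap from $[0,\delta]\subseteq K$ to $K=[0,1]$ is exactly right, since each $g_j$ is an increasing homeomorphism fixing $0$ and hence $g_j([0,D])=[0,g_j(D)]$. You also, sensibly, prove the intended statement---density of the full semigroup orbit of each $x\in(0,1)$---rather than the literal one in the lemma, which is false as written (for a constant sequence $\omega$ the trajectory $g_i^n(x)$ is monotone and converges to a fixed point of $g_i$).

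The gap is in the local step near $0$, and it is quantitative. Your scheme---reach depth $t_0=-\ln z$, apply $g_2$ $n$ times and $g_1$ $m$ times, and choose $m,n$ by equidistribution of $n|b|\bmod a$---leaves two error sources uncontrolled by the stated hypotheses. First, only $g_1'$ is assumed H\"older; $g_2$ is merely $C^1$ at $0$, so each application of $g_2$ at depth $x$ perturbs $t$ by an error that is $o(1)$ with no rate, while the equidistribution argument needs $n\to\infty$ to reach accuracy $\varepsilon\to0$; the accumulated error $n\cdot o(1)$ is a priori unbounded. (This one is repairable: fix the number $N(\varepsilon)$ of rotation steps needed \emph{before} choosing the depth.) Second, and more seriously, the distortion sum $\sum_j O(x_j^\theta)$ along a $g_1$-block surfacing from depth $x$ to a target $y$ does not tend to zero; it converges to a finite, generally nonzero limit depending on $y$. ``Uniformly bounded'' is not ``negligible'': with only the bounded-distortion bound you get that the orbit comes within $O(\delta^{1+\theta})$ of every point of $(0,\delta)$, which is not density. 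The repair---and this is what \cite[Lemma~3]{Ilyashenko2010} actually does---is to use the $C^{1+\theta}$ hypothesis to build a Koenigs linearizing chart $h$ for $g_1$ near $0$ (with $h\circ g_1=g_1'(0)\,h$ and $h'(0)=1$); in the coordinate $\ln h$ the $g_1$-blocks are exact translations, the set of angular accumulation points of $K$ at $0$ in the circle $\Rset/(\ln g_1'(0))\mathbb{Z}$ is closed, nonempty and invariant under the rotation by $\ln g_2'(0)$ (one application of $g_2$ at a time, so no error accumulation), hence is the whole circle by irrationality, and local density follows by applying $g_1$-blocks alone. Since both you and the authors ultimately defer to Ilyashenko for exactly this step, your outline is reasonable, but the sentence claiming that H\"older continuity ``controls the accumulated error uniformly'' conceals precisely the point where the proof lives.
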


Uniqueness may be shown, under conditions of the preceding Lemma,
for instance using an argument given in \cite[Lemma~3.4]{Homburg2016};
we give another proof, which does not rely on injectivity of each $g\in\Gamma$.
\begin{lemma}\label{lem:exuniq}
	Under conditions of the preceding Lemma, there is a unique stationary measure in $\MM_1(0,1)$.
\end{lemma}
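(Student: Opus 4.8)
The plan is to compare two stationary measures through their cumulative distribution functions and to run a maximum-principle argument driven entirely by condition~(1) of Definition~\ref{def2}. Existence of a stationary measure in $\MM_1(0,1)$ is already provided by Lemma~\ref{lem01bis}, so the task is uniqueness. Given $\mu,\nu\in\MM_1(0,1)$ with $P\mu=\mu$ and $P\nu=\nu$, write $F_\mu(x)=\mu([0,x])$, $F_\nu(x)=\nu([0,x])$ and $H=F_\mu-F_\nu$. Since each $g_i$ is an increasing homeomorphism of $[0,1]$, one has $g_i^{-1}([0,x])=[0,g_i^{-1}(x)]$, so $P\mu=\mu$ reads as the pointwise identity $F_\mu(x)=\sum_i p_i F_\mu(g_i^{-1}(x))$, and likewise for $\nu$; subtracting gives $H(x)=\sum_i p_i H(g_i^{-1}(x))$ for every $x\in[0,1]$. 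Because $\mu,\nu\in\MM_1(0,1)$ we have $H(0)=H(1)=0$. Note that this uses only admissibility (Definition~\ref{def2}(1) and positivity of the weights), which is part of the standing hypotheses of the preceding Lemma.

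First I would show that every stationary $\mu\in\MM_1(0,1)$ is atomless, so that $F_\mu$, and hence $H$, is continuous. If $a_{\max}=\max_x\mu(\{x\})>0$, the necessarily finite set $A_{\max}=\{x:\mu(\{x\})=a_{\max}\}$ lies in $(0,1)$, and stationarity together with $p_i>0$ forces $\mu(\{g_i^{-1}(x)\})=a_{\max}$ for each $i$ and each $x\in A_{\max}$, i.e. $g_i^{-1}(A_{\max})\subset A_{\max}$. Taking $a=\min A_{\max}\in(0,1)$ gives $g_i^{-1}(a)\ge a$, hence $g_i(a)\le a$ for all $i$, contradicting the existence (condition~(1)) of some $j$ with $g_j(a)>a$. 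Thus $\mu$ carries no atoms.

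Now for the core. With $H$ continuous on $[0,1]$ and $H(0)=H(1)=0$, suppose $s=\max H>0$; it is attained on the compact set $A=\{H=s\}\subset(0,1)$. Evaluating the identity at a point of $A$ and using $p_i>0$ together with $H\le s$ everywhere forces $H(g_i^{-1}(x))=s$, that is $g_i^{-1}(A)\subset A$, for every $i$. Setting $a=\min A\in(0,1)$ yields $g_i^{-1}(a)\ge a$, whence $g_i(a)\le a$ for all $i$, again contradicting condition~(1). Therefore $s\le 0$; applying the same reasoning to $-H$ gives $\min H\ge 0$, so $H\equiv 0$ and $\mu=\nu$.

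The main obstacle is the regularity bookkeeping that makes the maximum principle legitimate: one must eliminate atoms before the extremal value of $H$ can be attained and the level set $A$ manipulated, and one must verify that the extremal set stays inside the open interval so that condition~(1) applies at its minimum. Observe that the argument never uses differentiability or minimality, and uses injectivity only cosmetically: replacing $g_i^{-1}$ by the generalized inverse $\gamma_i(x)=\sup\{y:g_i(y)\le x\}$, which is again nondecreasing, essentially the same computation and extremal argument apply to nondecreasing, possibly non-injective, maps. This is precisely the robustness one wants when passing to the closure $\overline{\FF}$.
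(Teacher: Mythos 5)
Your proof is correct, but it takes a genuinely different route from the paper. The paper proves uniqueness probabilistically: for each stationary $\mu$ it forms the bounded martingale $\xi_n^{f,\mu}(\omega)=\int f(g_{i_1,\ldots,i_n}(x))\,\mu(dx)$, uses condition (1) to show that the images $g_{u_1,\ldots,u_n}([a,b])$ of an interval carrying mass $1-\varepsilon$ shrink below any prescribed diameter infinitely often $\Prob$-a.s., concludes that the limit measures $\omega(\mu)$ are Dirac masses $\delta_{v(\omega)}$ with $v(\omega)$ independent of $\mu$, and then identifies $\int f\,d\mu$ with $\int_\Omega f(v(\omega))\,\Prob(d\omega)$ for every stationary $\mu$. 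Your argument is instead a deterministic maximum principle on $H=F_\mu-F_\nu$: the functional equation $H(x)=\sum_i p_iH(g_i^{-1}(x))$, the atomlessness step (needed so that the extremum of $H$ is attained), and the invariance $g_i^{-1}(A)\subset A$ of the extremal level set, contradicted at $\min A$ by condition (1). Both are sound; yours is more elementary (no martingale convergence, no construction on $\Omega$) and uses strictly fewer hypotheses — it never invokes minimality, the irrationality/H\"older conditions of Lemma~\ref{fullsupp}, or the Lyapunov exponents, so it actually yields uniqueness for every admissible system, and your generalized-inverse remark recovers the robustness recorded in Remark~\ref{remNonHom}. What the paper's route buys in exchange is the synchronization information (a.s.\ collapse of $\omega(\mu)$ to point masses), which is of independent dynamical interest, and a method that does not depend on the order structure of $[0,1]$ the way your CDF argument essentially does. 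One small bookkeeping point you handled correctly but should keep explicit: the extremal sets $A_{\max}$ and $A$ avoid the endpoints only because $\mu,\nu\in\MM_1(0,1)$ forces $H(0)=H(1)=0$ and excludes atoms at $0$ and $1$; without that, condition (1) would not be applicable at $\min A$.
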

\begin{proof}
Let $M>0$ be such that $P(\Nma)\subset \Nma$
and let $\nu\in\Nma$.
Put
	\[
		\mu_n = \frac{ \nu+P\nu+\ldots+P^{n-1}\nu }{n}
	\]
	and note that, by \eqref{eq:Nc}, $\mu_n\in \Nma$.
	Let $\mu$ be an accumulation point of the sequence $(\mu_n)$ in the $*$-weak topology in $C([0,1])^*$.
	Then it is easy to check that also $\mu\in\Nma$.
	Moreover, since $P$ is a Feller operator, every accumulation point of the sequence $\mu_n$ is an invariant measure 
	for the process $P$.

We now prove the uniqueness. 
	Let $\mu\in\mathcal{M}_1(0,1)$ be an arbitrary invariant measure.
	Fix $f\in C([0,1])$.
	We define a sequence of random variables $(\xi_n^{f,\mu})_{n\in\Nset}$
	by the formula
	\[
		\xi_n^{f,\mu}(\omega) = \int_{[0,1]} f( g_{i_1,\ldots,i_n} (x) ) \mu(dx)
		\quad \;\text{for}\; \omega=(i_1,i_2,\ldots).
	\]
Since $\mu$ is an invariant measure for $P$, we easily check that $(\xi_n^{f,\mu})_{n\in\Nset}$
is a bounded martingale with respect to the natural filtration. Note that this martingale depends on the measure $\mu$.
From the Martingale Convergence Theorem it follows that $(\xi_n^{f,\mu})_{n\in\Nset}$ is convergent $\Prob$-a.s.
	and since the space $C([0,1])$ is separable, there exists a subset $\Omega_0$ of $\Omega$ with $\Prob(\Omega_0)=1$
	such that $(\xi_n^{f,\mu})_{n\in\Nset}$ is convergent for any $f\in C([0,1])$ and $\omega\in\Omega_0$.
	Therefore for any $\omega\in\Omega_0$ there exists a measure $\omega(\mu)\in\mathcal{M}_1$ such that 
	\[
		\lim_{n\to\infty} \xi_n^{f,\mu}(\omega) = \int_{[0,1]} f(x) \omega(\mu)(dx)
		\quad \;\text{for every}\; 
		f\in C([0,1]).
	\]
	We are now ready to show that for any $\varepsilon>0$ there exists $\Omega_{\varepsilon}\subset\Omega$
	with $\Prob(\Omega_{\varepsilon})=1$ satisfying the following property:
	for every $\omega\in\Omega_{\varepsilon}$ there exists an interval $I$ of length $|I|\le\varepsilon$ 
	such that $\omega(\mu)(I)\ge 1-\varepsilon$.
	Hence we obtain that $\omega(\mu) = \delta_{v(\omega)}$ 
	for all $\omega$ from some set $\tilde\Omega$ with $\Prob(\tilde\Omega)=1$.
	Here $v(\omega)$ is a point from $[0,1]$.

	Fix $\varepsilon>0$ and let $a,b\in(0,1)$ be such that $\mu([a,b]) > 1-\varepsilon$.
	Let $\ell\in\Nset$ be such that $1/\ell < \varepsilon/2$.
	Since for any $x\in(0,1)$ there exists $i\in\{1,\ldots,k\}$ such that $g_i(x)<x$,
	we may find a sequence $({\bf j}_n)_{n\in\Nset}$, ${\bf j}_n\in\Sigma_*$,
	such that $g_{{\bf j}_n}(b) \to 0$ as $n\to\infty$.
	Therefore, there exist ${\bf i}_1,\ldots,{\bf i}_\ell$ such that
	\[
		g_{{\bf i}_m}([a,b]) \cap g_{{\bf i}_n}([a,b]) = \emptyset
		\quad\;\text{
		for 
		}\;
		m,n\in\{1,\ldots,\ell\},
		m\neq n.
	\]
	Put $n^*=\max_{m\le \ell} |{\bf i}_m|$
	and set 
	$J_m = g_{{\bf i}_m}([a,b])$ for $m\in\{1,\ldots,\ell\}$.
	Each $J_m$ is a closed interval.
	(If we skipped the reqirement of injectivity of $g_i$'s, $J_m$ would possibly be a singleton; which does not spoil the proof; see Remark 4 afterwards).
	Now observe that for any sequence ${\bf u} = (u_1,\ldots,u_n)\in\Sigma_*$ 
	there exists $m\in\{1,\ldots,\ell\}$ such that $\diam(g_{{\bf u}}(J_m)) < 1/\ell < \varepsilon/2$.
	This shows that for any cylinder in $\Omega$, defined by fixing the first initial $n$ entries $(u_1,\ldots,u_n)$,
	the conditional probability, that $(u_1,\ldots,u_n,\ldots,u_{n+k})$
	are such that $\diam(g_{u_1,\ldots,u_n,\ldots,u_{n+k}}([a,b])) \ge \varepsilon/2$
	for all $k=1,\ldots,n^*$,
	is less than $1-q$ for some $q>0$ independent of $n$.
	Hence there exists $\Omega_\varepsilon \subset \Omega$ with $\Prob(\Omega_\varepsilon) = 1$
	such that for all $(u_1,u_2,\ldots) \in \Omega_{\varepsilon}$ we have $\diam(g_{u_1,\ldots,u_n}([a,b])) < \varepsilon/2$
	for infinitely many $n$.
	Since $[0,1]$ is compact, we may additionally assume that for infinitely many $n$'s
	the set $g_{u_1,\ldots,u_n}([a,b])$ is contained in a set $I$ (depending on $\omega=(u_1,u_2,\ldots)$)
	with $\diam(I)\le\varepsilon$.
	Observe that this $I$ does not depend on $\mu$;
	if only $\mu_1$, $\mu_2$ are two stationary distributions,
	and if $a$, $b$ are chosen in such a way that 
	$\mu_1([a,b]) > 1-\varepsilon$ 
	and
	$\mu_2([a,b]) > 1-\varepsilon$ 
	then we obtain
	that both 
	$\omega(\mu_1)(I)\ge 1-\varepsilon$
	and
	$\omega(\mu_2)(I)\ge 1-\varepsilon$,
	$\Prob$-a.s.
	Hence $\omega(\mu_1)=\omega(\mu_2)=\delta_{v(\omega)}$
	for $\Prob$-almost every $\omega\in\Omega$.
	Consequently, for any $f\in C([0,1])$ we have, for $i=1,2$, 
	\begin{equation*}
	\int_{[0,1]} f(x) \mu_i(dx) 
	= \lim_{n\to\infty} \int_{[0,1]} f(x) P^n\mu_i(dx)
	= \int_\Omega \lim_{n\to\infty} \xi_n^{f,\mu_i}(\omega) \Prob(d\omega).
	\end{equation*}
	Since the last integral equals $\int_\Omega f(v(\omega)) \Prob(d\omega)$
	in both cases, and since
	$f\in C([0,1])$ was arbitrary, we obtain $\mu_1=\mu_2$ and the proof is complete.
\end{proof}

\begin{remark}\label{remNonHom}
	The proofs of Lemmata \ref{lem01}, \ref{fullsupp} and \ref{lem:exuniq}
	do not use the fact that $g_i$, restricted to $(\beta,1-\beta)$, is injective.
	Precisely, these results 
	hold even if we replace the word "homeomorphisms" 
	by "functions" in Definition \ref{def2}.
\end{remark}

\section{Auxiliary results and main theorem}
Recall that the $*$-weak topology in $C([0,1])^*$ is induced by the Fortet-Mourier norm 
\[
\FM{\nu} = \sup
\left\{
\langle f, \nu \rangle : f\in BL_1
\right\},
\]
where $BL_1$ is the space of all functions $f:[0,1]\to R$ 
such that $|f(x)|\le 1$ and $|f(x)-f(y)|\le|x-y|$ for $x,y\in [0,1]$.

\bigskip
Denote by $d_0$ the distance between arbitrary (not necessarily admissible) Iterated Function Systems, 
given by
\[
d_0((\{S_1,\ldots,S_k\},p),(\{T_1,\ldots,T_k\},q)) = \sum_{i=1}^{k} |p_i-q_i|  + \|S_i-T_i\|.
\]
\begin{lemma}\label{L1}
	Let $(S,p)$ and $(T,q)$ be given Iterated Function Systems and let $P_S$ and $P_T$ be the corresponding 
	Markov operators. Then for every $\mu_1$, $\mu_2\in\MM_{fin}$ we have
	\begin{equation}\label{eq32}
	\FM{(P_S-P_T)(\mu_1-\mu_2)} 
	\le
	(\mu_1([0,1])+\mu_2([0,1])) \:
	d_0( (S,p), (T,q) ).
	\end{equation}
\end{lemma}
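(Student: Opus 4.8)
The plan is to move everything to the dual side and estimate in the supremum norm. Write $\nu=\mu_1-\mu_2$, extend $P_S,P_T$ to signed measures by linearity, and recall the duality $\int f\,d(P\mu)=\int P^*f\,d\mu$. Then for every test function $f\in BL_1$ I would rewrite
\[
\langle f,(P_S-P_T)\nu\rangle=\langle P_S^*f,\nu\rangle-\langle P_T^*f,\nu\rangle=\langle P_S^*f-P_T^*f,\nu\rangle ,
\]
so that the operators now act on the (nice, bounded, Lipschitz) function $f$ rather than on the measure. The problem splits into two independent estimates: a uniform-in-$f$ bound on $\|P_S^*f-P_T^*f\|$, and a bound for the integral of a bounded function against $\nu$.

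For the first estimate I would use the explicit form $P_S^*f(x)=\sum_i p_i f(S_i(x))$ and $P_T^*f(x)=\sum_i q_i f(T_i(x))$ and insert the intermediate term $\sum_i p_i f(T_i(x))$, giving the pointwise splitting
\[
P_S^*f(x)-P_T^*f(x)=\sum_i p_i\bigl(f(S_i(x))-f(T_i(x))\bigr)+\sum_i (p_i-q_i)\,f(T_i(x)).
\]
Since $f\in BL_1$ is $1$-Lipschitz and bounded by $1$, the first sum is at most $\sum_i p_i\|S_i-T_i\|$ and the second at most $\sum_i|p_i-q_i|$ in absolute value. Using $p_i\le 1$ this yields the uniform bound $\|P_S^*f-P_T^*f\|\le d_0((S,p),(T,q))$, valid for every $f\in BL_1$.

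For the second estimate the key observation is that for any continuous $g$ one has $|\langle g,\mu_1-\mu_2\rangle|\le\|g\|\,(\mu_1([0,1])+\mu_2([0,1]))$, which follows from the elementary fact that the signed measure $\mu_1-\mu_2$ has total variation at most $\mu_1+\mu_2$ (since $|\mu_1(A)-\mu_2(A)|\le\mu_1(A)+\mu_2(A)$ on every Borel set). Applying this with $g=P_S^*f-P_T^*f$ and combining with the previous paragraph gives $|\langle f,(P_S-P_T)\nu\rangle|\le(\mu_1([0,1])+\mu_2([0,1]))\,d_0((S,p),(T,q))$, and taking the supremum over $f\in BL_1$ delivers \eqref{eq32}. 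The computation is routine; the only point that needs care is the mass coefficient, where one must invoke $|\mu_1-\mu_2|\le\mu_1+\mu_2$ to recover exactly $\mu_1([0,1])+\mu_2([0,1])$ rather than a cruder bound. Note that nothing in the argument uses monotonicity, differentiability, or admissibility, consistent with the statement being phrased for arbitrary Iterated Function Systems through the distance $d_0$.
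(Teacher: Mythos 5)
Your proof is correct, and it is essentially the standard argument: the paper does not prove Lemma \ref{L1} itself but refers to \cite{Szarek1998}, where the estimate is obtained in the same way — dualizing to $\langle P_S^*f-P_T^*f,\mu_1-\mu_2\rangle$, splitting via the intermediate term $\sum_i p_i f(T_i(x))$, and using $|f|\le 1$, $\Lip(f)\le 1$ together with the trivial bound of a signed integral by total mass. No gaps; the step $\sum_i p_i\|S_i-T_i\|\le\sum_i\|S_i-T_i\|$ and the mass bound $|\langle g,\mu_1-\mu_2\rangle|\le\|g\|(\mu_1([0,1])+\mu_2([0,1]))$ give exactly the stated constant.
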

A proof of this lemma may be found in \cite{Szarek1998}.
\begin{lemma}\label{lem03}
	Suppose that the Iterated Function Systems: $(S,p)$, and $(S_1,p)$, $(S_2,p)$, \ldots, are such that, for some $M>0$ and $\alpha\in(0,1)$,
	\[
		\mu_1,\mu_2,\ldots\in\PMalpha
	\;\wstawka{and} 
	\lim_{n\to\infty}d_0((S_n,p),(S,p))=0
	\]
	where $\mu_n$ is a stationary distribution for $(S_n,p)$.
	Then $(\mu_n)_n$ admits a subsequence weakly convergent to 
	a stationary distribution $\mu$ for $(S,p)$;
	moreover, $\mu\in\PMalpha$.
\end{lemma}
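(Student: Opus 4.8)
The plan is to prove Lemma~\ref{lem03}, which asserts that a sequence of stationary measures $(\mu_n)$, each lying in the common invariant set $\PMalpha$ and each stationary for an IFS $(S_n,p)$ converging to $(S,p)$, has a subsequence converging weakly to a stationary measure $\mu$ for the limit system $(S,p)$, with $\mu\in\PMalpha$. Let me sketch how I would approach this.

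First I would extract a convergent subsequence. Since all $\mu_n$ are probability measures on the compact space $[0,1]$, the set $\MM_1$ is weakly compact (by Prokhorov's theorem, or directly by Helly/Banach--Alaoglu applied to $C([0,1])^*$). Hence $(\mu_n)$ has a subsequence, which I relabel as $(\mu_n)$, converging weakly (equivalently, in the Fortet--Mourier norm $\FM{\cdot}$, which induces the $*$-weak topology) to some $\mu\in\MM_1$. The membership $\mu\in\PMalpha$ should follow from the closedness of $\PMalpha$ under weak convergence: the defining inequalities $\mu([0,x])\le Mx^\alpha$ and $\mu([1-x,1])\le Mx^\alpha$ pass to the limit. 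The one subtlety here is that $\mu\mapsto\mu([0,x])$ is only upper semicontinuous at the closed set $[0,x]$ under weak convergence, which is exactly the direction I need, so $\mu([0,x])\le\limsup_n\mu_n([0,x])\le Mx^\alpha$; the same works at the right endpoint, giving $\mu\in\PMalpha$. In particular $\mu\in\MM_1(0,1)$, since the bounds force $\mu(\{0\})=\mu(\{1\})=0$.

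The heart of the argument is showing $\mu$ is stationary for $(S,p)$, i.e. $P_S\mu=\mu$. Here I would use Lemma~\ref{L1} together with a triangle-inequality decomposition in the Fortet--Mourier norm. Writing $P_n:=P_{S_n}$ and $P:=P_S$, and using $P_n\mu_n=\mu_n$, I would estimate
\[
\FM{P\mu-\mu}
\le
\FM{P\mu-P\mu_n}
+\FM{P\mu_n-P_n\mu_n}
+\FM{\mu_n-\mu}.
\]
The middle term is controlled by Lemma~\ref{L1}: taking $\mu_1=\mu_n$, $\mu_2=0$ there gives $\FM{(P-P_n)\mu_n}\le \mu_n([0,1])\,d_0((S,p),(S_n,p))=d_0((S,p),(S_n,p))\to 0$ by hypothesis. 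The third term tends to $0$ by the choice of subsequence. For the first term, I would invoke that $P=P_S$ is a Feller operator and, more precisely, that it is nonexpansive (or at least continuous) with respect to $\FM{\cdot}$; indeed for $f\in BL_1$ the dual image $P^*f(x)=\sum_i p_i f(S_i(x))$ is again bounded by $1$ and Lipschitz with a constant bounded by $\max_i\Lip(S_i)$, so after possibly rescaling one gets $\FM{P\mu-P\mu_n}\le C\,\FM{\mu-\mu_n}\to0$. Combining the three estimates yields $\FM{P\mu-\mu}=0$, hence $P\mu=\mu$.

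The main obstacle I anticipate is the first term, $\FM{P(\mu-\mu_n)}$: the maps $S_i$ in the limit system need not be Lipschitz on all of $[0,1]$, so the naive bound via $\max_i\Lip(S_i)$ may fail and $P^*$ need not map $BL_1$ into a fixed multiple of $BL_1$. The clean way around this is to avoid the Lipschitz constant altogether and instead argue directly: $P$ is continuous in the $*$-weak topology because $P^*f\in C([0,1])$ for every $f\in C([0,1])$ (the Feller property), so $\mu_n\to\mu$ weakly implies $P\mu_n\to P\mu$ weakly, i.e. $\FM{P\mu_n-P\mu}\to0$. This only uses the Feller property already recorded in the excerpt and sidesteps any uniform-Lipschitz requirement. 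With that replacement the three terms all vanish in the limit, and since the limit $\mu$ together with its stationarity and the bound $\mu\in\PMalpha$ have all been established, the proof is complete.
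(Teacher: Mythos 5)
Your overall strategy coincides with the paper's own proof: extract a weakly convergent subsequence by Prokhorov's theorem, use Lemma~\ref{L1} with $\mu_2=0$ to get $\FM{(P-P_n)\mu_n}\le \mu_n([0,1])\,d_0((S_n,p),(S,p))\to0$, use the Feller property (i.e.\ weak continuity of $P$, rather than any Lipschitz bound on $P^*$) to get $\FM{P\mu_n-P\mu}\to0$, and conclude $P\mu=\mu$ from the triangle inequality. Your remark that one should not try to push $BL_1$ through $P^*$, since the maps need not be Lipschitz, is precisely the point; the paper likewise appeals only to weak continuity.

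The one step that is wrong as written is the verification that $\mu\in\PMalpha$. You correctly name the relevant fact --- $\nu\mapsto\nu(F)$ is upper semicontinuous for closed $F$ under weak convergence --- but then state the inequality backwards: upper semicontinuity means $\limsup_n\mu_n([0,x])\le\mu([0,x])$, not $\mu([0,x])\le\limsup_n\mu_n([0,x])$. The inequality you wrote is false in general: take $\mu_n=\delta_{x+1/n}\to\delta_x$; then $\mu_n([0,x])=0$ for every $n$ while the limit measure gives $[0,x]$ full mass. So the closed-set half of the portmanteau (Alexandrov) theorem yields nothing in the direction you need. The repair is to use the open-set half together with the fact that the hypothesis $\mu_n([0,y])\le My^\alpha$ holds for \emph{every} $y$, not only $y=x$: for $y>x$ the set $[0,y)$ is open in $[0,1]$, so $\mu([0,x])\le\mu([0,y))\le\liminf_n\mu_n([0,y))\le My^\alpha$, and letting $y\downarrow x$ gives $\mu([0,x])\le Mx^\alpha$; the right endpoint is handled symmetrically. (The paper instead applies the closed-set inequality to $[x,1-x]$, whose $\mu_n$-measure is bounded \emph{below}, so there the direction of the inequality is the usable one.) With this correction your argument is complete and is essentially the paper's.
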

\begin{proof}
	By the Prohorov theorem, there is a subsequence $\mu_{n_k}$, converging in $(\MM_1,\FM{\cdot})$
	to some $\muprim\in\MM_1$. 
	We denote it by $\mu_n$ again, for convenience, so $\FM{\mu_n-\muprim}\to0$.
	Applying Lemma \ref{L1},
	we get
	\[
	\FM{P\mu_n - \mu_n} =
	\FM{(P-P_n)\mu_n} \le \mu_n([0,1])\;d_0((S_n,p),(S,p)) \to 0,
	\]	
	where $P$, $P_n$ are the Markov operators corresponding to $(S,p)$, $(S_n,p)$, respectively.
	The weak continuity of $P$ implies $P\mu_n\to P\muprim$ in $\FM{\cdot}$,
	so that $\FM{P\muprim-\muprim}=0$. 
	Now, $\muprim$ is an invariant measure for $P$, but we still have to check
	$\muprim\in\PMalpha$.
	Indeed, let $x\in(0,1)$ be arbitrary. Then
	\[
	\mu_n([x,1-x]) \ge 1-2Mx^\alpha \quad\wstawka{for all} n\in\Nset.
	\]
	By the Alexandrov Theorem, $\muprim([x,1-x])\ge\limsup_n \mu_n([x,1-x])\ge 1-2Mx^\alpha$,
	which means that $\muprim\in\PMalpha$.
\end{proof}
Let $\MM_1^\varepsilon$ be the set of those $\mu\in\MM_1$, for which 
there is a Borel set $A$ with the Lebesgue measure $m(A)<\varepsilon$ and such that
$\mu(A)>1-\varepsilon/2$.
\begin{remark}\label{rem02}
	Note that, if $\mu\in\MM_1$ is singular, then for every $\varepsilon>0$ there is $\delta>0$ such that 
	for $\nu\in\MM_1$, $\FM{\mu-\nu}<\delta$
	we have $\nu\in\MM_1^\varepsilon$.
	For a proof, see \cite{Diss}, Lemma 2.4.1.
\end{remark}
For every $n\in\Nset$, let $\FF_n\subset\FF$ be the set of all $(\Gamma,p)\in\FF$
with invariant measure $\mu\in\MM_1(0,1)\cap \MM_1^{1/n}$.

\begin{lemma}\label{lemFndense}
	For every $n\in\Nset$ the set $\FF_n$ is dense in the space $\FF$ endowed with the metric $d$.
\end{lemma}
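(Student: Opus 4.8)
The plan is to perturb a given admissible system $(\Gamma,p)\in\FF$, at any prescribed distance $\rho>0$ in the metric $d$, into another admissible absolutely continuous system whose unique invariant measure concentrates almost all of its mass on a set of Lebesgue measure $<1/n$. The whole perturbation takes place in the interior $(\beta,1-\beta)$, where $d$ controls only the sup-norms $\|g_i-\tilde g_i\|$ and $\|g_i^{-1}-\tilde g_i^{-1}\|$ and \emph{not} the derivatives; this is precisely the freedom we exploit.

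First I would replace each $g_i$ by a \emph{staircase} homeomorphism $\tilde g_i$, leaving $\tilde g_i=g_i$ unchanged on $[0,\beta]\cup[1-\beta,1]$, so that the derivative term in $d$ vanishes, the endpoint Lyapunov exponents of Definition \ref{def2}(2) are untouched, and $\tilde g_i\in\Cplusa$. On the interior $\tilde g_i$ is chosen nondecreasing, piecewise linear and $\rho$-close to $g_i$ in the supremum norm, alternating long \emph{flat} pieces of very small slope (of total length close to $1$), on which $\tilde g_i$ is essentially constant, with short \emph{steep} pieces (of total length $<\eta$) on which it climbs. Since $\tilde g_i$ is Lipschitz it is absolutely continuous; taking $\tilde p=p$ and using that on the compact set $[\beta,1-\beta]$ the margins $\min_x\max_j(g_j(x)-x)$ and $-\max_x\min_i(g_i(x)-x)$ are strictly positive, a choice of $\rho$ smaller than these margins preserves admissibility condition (1), so $(\tilde\Gamma,\tilde p)\in\FF$. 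Because $\tilde g_i$ coincides with $g_i$ near the fixed points, the hypotheses guaranteeing a unique invariant measure (Lemmas \ref{fullsupp} and \ref{lem:exuniq}) are inherited, so there is a unique $\tilde\mu\in\MM_1(0,1)$. One also checks that $\|\tilde g_i^{-1}-g_i^{-1}\|$ is small, sup-norm closeness of increasing homeomorphisms passing to their inverses, whence $d((\Gamma,p),(\tilde\Gamma,\tilde p))<\rho$.

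Let $A$ be a union of short intervals, one around each of the finitely many flat-values of $\tilde g_1,\dots,\tilde g_k$; by making the flats sufficiently flat we force $m(A)<1/n$ irrespective of the number of flats. Writing $F_i$ for the union of the flat pieces of $\tilde g_i$ and $S_i$ for the union of its steep pieces, we have $\tilde g_i(F_i)\subset A$, hence $F_i\subset\tilde g_i^{-1}(A)$, and the invariance identity $\tilde\mu=\sum_i p_i\,\tilde g_i\tilde\mu$ gives
\[
\tilde\mu(A)=\sum_i p_i\,\tilde\mu(\tilde g_i^{-1}(A))\ge\sum_i p_i\,\tilde\mu(F_i)=1-\sum_i p_i\,\tilde\mu(S_i).
\]
Thus it suffices to bound the leftover mass $\sum_i p_i\,\tilde\mu(S_i)$ by $1/(2n)$, which would yield $\tilde\mu\in\MM_1^{1/n}$.

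The main obstacle is exactly this bound on $\tilde\mu\big(\bigcup_i S_i\big)$: the steep pieces have small Lebesgue measure, but $\tilde\mu$ is defined only implicitly and could a priori charge them. The mechanism to rely on is that the steep pieces are strongly expanding (slope of order $1/\eta$) and hence repelling, so the invariant measure should avoid them. To make this quantitative I would arrange the steep regions to be disjoint from $A$ (so that no flat-value lies in a steep piece) and iterate the invariance identity for $S=\bigcup_i S_i$: over the flats the preimage $\tilde g_i^{-1}(S)$ meets only transition regions of negligible length, while over the steeps $\tilde g_i^{-1}$ contracts lengths by the factor $\eta$, so the pulled-back sets $\tilde g_\omega^{-1}(S)$ shrink geometrically in $|\omega|$. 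Combining this contraction with the $\PMalpha$-control near the endpoints (Lemma \ref{lem01}), which prevents mass from escaping to $\{0,1\}$, and letting $\eta\to0$ while holding the flat scale fixed so that $m(A)<1/n$ persists, should force $\sum_i p_i\,\tilde\mu(S_i)<1/(2n)$. Converting the heuristic that expanding regions repel the invariant measure into an estimate valid uniformly over the implicitly defined $\tilde\mu$ is the crux; the perturbation and admissibility bookkeeping above are routine, and Remark \ref{remNonHom} shows the argument is unaffected if some flat pieces degenerate.
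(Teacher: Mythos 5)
Your construction of the perturbation (staircase modifications supported in $(\beta,1-\beta)$, preserving the derivative data at the endpoints and hence admissibility and the Lyapunov conditions) matches the spirit of the paper's construction, but the proof has a genuine gap exactly where you flag it: the bound $\sum_i p_i\,\tilde\mu(S_i)<1/(2n)$ is never established, and the heuristic ``expanding pieces repel the invariant measure'' does not convert into an estimate by the iteration you sketch. Writing $\tilde\mu(S)=\sum_{\omega}P(\omega)\,\tilde\mu(\tilde g_\omega^{-1}(S))$ only trades $\tilde\mu(S)$ for $\tilde\mu$ of sets of ever smaller \emph{Lebesgue} measure; since the whole point of the lemma is that $\tilde\mu$ may charge Lebesgue-small sets heavily, this is circular unless you can show the pulled-back sets accumulate only on $\tilde\mu$-null sets, which is not addressed. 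The $\PMalpha$ control from Lemma \ref{lem01} only prevents escape to $\{0,1\}$ and says nothing about interior accumulation points of the preimages.

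The paper sidesteps this difficulty entirely by degenerating rather than estimating: it modifies only $g_1$ so that the systems $(\Gamma_m,p)$ converge in $d_0$ to a \emph{degenerate} system $(S_0,p)$ in which $g_0$ is constant equal to $x_0$ on an interval $[u,v]\subset(\beta,1-\beta)$. By Remark \ref{remNonHom} this degenerate system still has a unique stationary $\mu_0$ of full support, and invariance forces $\mu_0(\{x_0\})\ge p_1\mu_0([u,v])>0$, so $\mu_0$ has an atom and is therefore singular. Then Lemma \ref{lem03} (the stationary measures $\mu_m$ of $(\Gamma_m,p)$ converge to $\mu_0$ in $\FM{\cdot}$, all lying in one $\PMalpha$ because the maps agree near the endpoints) combined with Remark \ref{rem02} (every FM-neighborhood of a singular measure lies in $\MM_1^{1/n}$) yields $(\Gamma_m,p)\in\FF_n$ for large $m$, with no direct estimate on any $\mu_m$ required. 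If you want to complete your argument along the paper's lines, replace the quantitative claim about $\tilde\mu(S_i)$ by this limit-plus-continuity step; as written, the crux of your proof is missing.
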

\begin{proof}
	Fix $(\Gamma,p)\in\FF$, $n\in\Nset$ and $\varepsilon>0$. 
	We may assume that $(\Gamma,p)$ satisfies the requirements of the Lemma \ref{fullsupp}.
	It is easy to construct absolutely continuous modifications of $g_1$ (on $(\beta,1-\beta)$) in such a way,
	that we obtain a sequence $(\Gamma_m,p)_{m\in\Nset}\subset\FF$ satisfying 
	$d((\Gamma_m,p),(\Gamma,p))<\varepsilon$ and
	\[
	\lim_{m\to\infty}d_0((\Gamma_m,p),(S_0,p)) = 0
	\]
	where $S_0=\{g_0,g_2,\ldots,g_k\}$ and $g_0(x)=x_0$ for $x\in[u,v]\subset(\beta,1-\beta)$, provided $v-u<2\varepsilon$.
	As we have noted in the Remark \ref{remNonHom}, 
    due to the inclusion $[u,v]\subset(\beta,1-\beta)$,
the system $(S_0,p)$ has a unique stationary distribution $\mu_0\in\MM_1(0,1)$, and $\mu_0$ has full support in $[0,1]$.
	Since $\mu_0$ is invariant, we obtain $\mu_0(\{x_0\})>0$,
	so $\mu_0$ is not absolutely continuous.
	As remarked earlier, the uniqueness of $\mu_0$
	implies that it is either absolutely continuous or singular.
	
	Now, $\mu_0$ is singular and we would like to apply the 
	Remark \ref{rem02},
	to find $(\Gamma_m,p)\in\FF_n$ for some $m\in\Nset$.
	In the light of the
	Lemma \ref{lem03},
	it suffices to show that all $(\Gamma_m,p)$ have stationary distributions
	belonging to the same set $\PMalpha$, for some $M,\alpha>0$.
	But this follows easily from the proof of Lemma \ref{lem01},
	by taking account of the fact that
	the corresponding elements of $\Gamma_m$ are identical on $(0,\beta)\cup(1-\beta,1)$.
\end{proof}

\begin{theorem}
	The set of all $(\Gamma,p)\in\overline{\FF}$ which have a unique singular stationary distribution in $\MM_1(0,1)$
	is residual in $\overline{\FF}$.
\end{theorem}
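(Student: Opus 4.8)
The plan is to run the Baire category theorem in the complete space $(\overline{\FF},d)$. For each $n\in\Nset$ set
\[
\mathcal{U}_n=\{(\Gamma,p)\in\overline{\FF}: (\Gamma,p)\text{ admits a stationary }\mu\in\MM_1(0,1)\cap\MM_1^{1/n}\},
\]
and let $V_n=\mathrm{int}\,\mathcal{U}_n$. I would prove that every $V_n$ is dense (and open by construction), so that $\bigcap_n V_n$ is a residual $G_\delta$. The point of the definition is the following elementary observation, which I would record first: if a system has a \emph{unique} invariant measure $\mu\in\MM_1(0,1)$ and lies in $\bigcap_n V_n$, then $\mu\in\MM_1^{1/n}$ for every $n$; choosing witnessing sets $A_{2^k}$ with $m(A_{2^k})<2^{-k}$ and $\mu(A_{2^k})>1-2^{-k-1}$ and applying Borel--Cantelli together with reverse Fatou to $E=\limsup_k A_{2^k}$ yields $m(E)=0$ and $\mu(E)=1$, so $\mu$ is singular. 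Thus on $\bigcap_n V_n$, intersected with the systems having a unique invariant measure, the measure is automatically singular; note that this route produces singularity directly and never invokes the singular/absolutely continuous dichotomy.

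Density is inherited from work already done: $\FF$ is dense in $\overline{\FF}$ by definition of the closure, and $\FF_n\subseteq\mathcal{U}_n$, so Lemma~\ref{lemFndense} shows $\mathcal{U}_n$ is dense; what must be upgraded is that the approximating systems are \emph{interior} points. Here I would fix one of the systems $(\Gamma_0,p_0)$ furnished by Lemma~\ref{lemFndense}: it is admissible with a strict margin, has a unique invariant measure $\mu_0\in\MM_1^{1/n}$, and, by outer regularity of $m$, admits an \emph{open} witness $A$ with $m(A)<1/n$ and $\mu_0(A)>1-1/(2n)$ \emph{strictly}. For $(\Gamma',p')$ in a small $d$-ball about $(\Gamma_0,p_0)$ the endpoint derivatives vary continuously, so the Lyapunov exponents stay positive with a uniform margin; the constants in Lemma~\ref{lem01} may then be chosen uniformly and one common $\PMalpha$ is kept invariant by all these systems, whence each has a stationary measure in $\PMalpha\subseteq\MM_1(0,1)$. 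By Lemma~\ref{lem03} any sequence of such measures along systems converging to $(\Gamma_0,p_0)$ subconverges to an invariant measure of $(\Gamma_0,p_0)$, which by uniqueness is $\mu_0$; hence these stationary measures converge weakly to $\mu_0$, and since $A$ is open the portmanteau inequality gives $\liminf\nu'(A)\ge\mu_0(A)>1-1/(2n)$. So a whole $d$-neighbourhood of $(\Gamma_0,p_0)$ sits inside $\mathcal{U}_n$, i.e. $(\Gamma_0,p_0)\in V_n$; as such points are dense, $V_n$ is open and dense.

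There remains the uniqueness needed in the first paragraph. I would show that the set of systems of $\overline{\FF}$ with a unique invariant measure in $\MM_1(0,1)$ contains a residual set. Positivity of both Lyapunov exponents is an open condition satisfied on all of $\FF$, hence holds on an open dense subset of $\overline{\FF}$; inspection of the proof of Lemma~\ref{lem:exuniq} shows that it uses only admissibility — monotonicity together with the crossing condition (1), through the pigeonhole applied to the disjoint images $J_m$ — and not the H\"older or irrationality hypotheses, so uniqueness propagates to this open dense set, away from the closed, nowhere-dense loci where an exponent degenerates or a graph becomes diagonally tangent. Intersecting this uniqueness set with $\bigcap_n V_n$ produces a residual family whose unique invariant measure in $\MM_1(0,1)$ is singular, which is the theorem.

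The hard part is the openness step, and it is where I would concentrate. The obstruction is that $\MM_1^{1/n}$ is cut out by $\mu\mapsto\mu(A)$, a functional that is only \emph{lower} semicontinuous and only for \emph{open} $A$; transporting membership to neighbouring systems therefore forces the combination of three ingredients at once — choosing the witness open with strict slack, promoting the merely subsequential convergence of Lemma~\ref{lem03} to genuine weak convergence by using uniqueness at the base system, and keeping a single $\PMalpha$ valid across an entire neighbourhood via uniform positivity of the Lyapunov exponents. Each is individually available from the lemmas above, but arranging them simultaneously at a \emph{dense} family of base systems is the crux.
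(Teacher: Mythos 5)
Your proof is correct and shares the paper's overall architecture --- a Baire category argument over neighbourhoods of the dense sets $\FF_n$, with Lemma~\ref{lem03} transferring the near-singularity of the base measures to the limit system --- but it differs in three implementation choices, each of which is legitimate. First, to show that membership in $\MM_1^{1/n}$ survives a small perturbation of the system, you replace an open witness for the Borel one (via outer regularity) and invoke lower semicontinuity of $\mu\mapsto\mu(A)$ for open $A$ under the weak convergence $\nu'\to\mu_0$ (obtained from Lemma~\ref{lem03} plus uniqueness at the base point); the paper instead integrates the Lipschitz--Tietze function $f_{(\Gamma,p)}$ and runs the quantitative Fortet--Mourier estimate \eqref{eqContinuity} with the explicit constant $1/(2n\,l_{(\Gamma,p)})$. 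Your route avoids the Lipschitz-constant bookkeeping; the paper's yields an explicit radius $\delta_1$ and so does not need the contradiction-by-subsequences step. Second, you conclude singularity directly by Borel--Cantelli applied to $E=\limsup_k A_{2^k}$, never invoking the singular/absolutely-continuous dichotomy for the unique invariant measure, which the paper relies on implicitly; this is a small but genuine simplification. Third, you secure admissibility (hence uniqueness, via your correct observation that the proof of Lemma~\ref{lem:exuniq} uses only conditions (1)--(2) of Definition~\ref{def2} and not the irrationality or H\"older hypotheses --- consistent with how the paper itself applies uniqueness to $(T,q)$) by intersecting with a separate residual set, whereas the paper builds the margins $\delta_0,\delta_2$ into the radii of the balls around $\FF_n$. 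One imprecision to repair in that third step: the crossing condition (1) is \emph{not} an open condition on $\overline{\FF}$, because $x-\min_i g_i(x)\to 0$ as $x\to 0$ or $x\to 1$; the set where it holds is only a countable intersection of the open dense sets
\[
C_n=\bigl\{(\Gamma,p)\in\overline{\FF}:\ \min_i g_i(x)<x<\max_i g_i(x)\ \text{for all}\ x\in[1/n,1-1/n]\bigr\},
\]
hence residual rather than ``open dense'' as you assert (this is exactly why the paper defines $\delta_0$ only over $(1/n,1-1/n)$ and recovers the full condition from the intersection over $n$). Since residual suffices for your final intersection, the argument stands after this rewording. Your remaining unproved claim --- that the constants of Lemma~\ref{lem01} can be chosen uniformly over a small $d$-ball --- is needed at exactly the same point where the paper asserts continuity of $(\Gamma,p)\mapsto\mu_{(\Gamma,p)}$ ``by the proof of Lemma~\ref{lem01}'', and follows from the fact that $d$ controls $\sup_{[0,\beta]\cup[1-\beta,1]}|g_i'-h_i'|$; so you are no less rigorous than the source there.
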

\begin{proof}
	For $(\Gamma,p)\in\FF_n$, we find $\delta_0>0$ such that
	\[
		|x - \min_{1\le i\le k}g_i(x)| > \delta_0
		\wstawka{and}
		|x - \max_{1\le i\le k}g_i(x)| > \delta_0
		\quad\wstawka{on}
		(1/n,1-1/n).
	\]
	Moreover, to $(\Gamma,p)$ we adjoin a compact set $Z_{(\Gamma,p)}\subset[0,1]$ 
	such that 
	\[
	\mu_{(\Gamma,p)}(Z_{(\Gamma,p)}) > 1-\frac{1}{2n}
	\wstawka{and}
	m(Z_{(\Gamma,p)}) < \frac{1}{n},
	\]
	where $\mu_{(\Gamma,p)}\in\MM_1(0,1)$ is the invariant measure for ${(\Gamma,p)}$.
	Further, due to regularity of the Lebesgue measure,
	we can find a positive number $r_{(\Gamma,p)}$ such that
	\[
		m( O( Z_{(\Gamma,p)}, r_{(\Gamma,p)} ) ) < \frac1n,
	\]
	where $O(Z_{(\Gamma,p)},r_{(\Gamma,p)})$ is the open neighborhood of $Z_{(\Gamma,p)}$
	in $[0,1]$
	with the radius $r_{(\Gamma,p)}$.
	Denote by $A_{(\Gamma,p)}$ the set $[0,1]\setminus O( Z_{(\Gamma,p)}, r_{(\Gamma,p)} )$ 
	and consider the classical Tietze function $f_{(\Gamma,p)}:[0,1]\to\Rset$
	for the sets $Z_{(\Gamma,p)}$ and $A_{(\Gamma,p)}$ 
	given by the formula
	\[
	f_{(\Gamma,p)}(x) = \frac
	{\|x,A_{(\Gamma,p)}\|}
	{\|x,A_{(\Gamma,p)}\|+\|x,Z_{(\Gamma,p)}\|},
	\]
	where $\|x,A\|$ stands for the distance of point $x$ from the set $A$.
	We have $f_{(\Gamma,p)}=0$ for $x\in A_{(\Gamma,p)}$
	and $f_{(\Gamma,p)}(x)=1$ for $x\in Z_{(\Gamma,p)}$.
	Obviously, $|f_{(\Gamma,p)}|\le 1$ and $f_{(\Gamma,p)}$ is Lipschitz,
	with the Lipschitz constant $l_{(\Gamma,p)}>1$.
	
	By Lemma \ref{lem03}, and by the proof of Lemma \ref{lem01},
	the map taking $(\Gamma,p)\in\FF$ into its invariant measure $\mu_{(\Gamma,p)}\in\MM_1(0,1)$
	is continuous with respect to $(\FF,d)$ and $(\MM_1(0,1),\FM{\cdot})$.
	Hence, there is $\delta_1>0$
	such that for $(\Delta,q)\in\FF$ satisfying $d( (\Delta,q), (\Gamma,p) )<\delta_1$,
	we have 
	\begin{equation}\label{eqContinuity}
	\FM{\mu_{(\Gamma,p)} - \mu_{(\Delta,q)}} < \frac{1}{2n\cdot l_{(\Gamma,p)}},
	\end{equation}
	where $\mu_{(\Delta,q)}\in\MM_1(0,1)$ 
	is the invariant measure for $(\Delta,q)$.
	
	Additionally, since $(\Gamma,p)$ is admissible, 
	from the Definition \ref{def2} follows that 
	there is $\delta_2>0$ with the property
		\[
		\sum_{i=1}^{k}p_i\log (g_i'(0)-\delta_2) > 0
		\wstawka{and}
		\sum_{i=1}^{k}p_i\log (g_i'(1)-\delta_2) > 0.
		\]
	Now, for $(\Gamma,p)\in\FF_n$, with the aid of  $\delta_{(\Gamma,p)}=\min\{\delta_0/2,\delta_1,\delta_2\}$, 
	we define \[
	\widehat\FF = \bigcap_{n=1}^\infty \bigcup_{(\Gamma,p)\in\FF_n} B_{\overline{\FF}}((\Gamma,p),\deltaSp),
	\]
	where $B_{\overline{\FF}}((\Gamma,p),\deltaSp)$
	is an open ball in $(\overline{\FF},d)$ with center at $(\Gamma,p)$ and radius $\deltaSp$.
	Clearly $\widehat{\FF}$ as an intersection of open dense subsets of $\overline{\FF}$ is residual.

	We will show that if $(T,q)\in\widehat{\FF}$
	then $(T,q)$ has the unique singular stationary distribution supported on $(0,1)$.
	Let $(T,q)$ be fixed; it is a limit, in $(\FF,d)$, of the sequence 
	$((\Gamma,p)_n)_{n\in\Nset}$.
	As we have noted, $T=\{h_1,\ldots,h_k\}$ is 
	a collection of homeomorphisms, due to convergence in the metric $d$.
	By the same token,
	the functions $h_i$ are even of class $C^1$ on $[0,\beta]$ and on $[1-\beta,1]$.
	It is easy to check that $(T,q)$ satisfies the condition (1) from Definition \ref{def2},
	due to the proper choice of $\deltaSp<\delta_0$.
	Moreover, by the inequality $\deltaSp\le\delta_2$, the Lyapunov exponents 
	for $(T,q)$ are kept positive.
	This means that $(T,q)$ is an admissible Iterated Function System.
	Hence it admits a unique invariant measure $\mu_{(T,q)}\in\MM_1(0,1)$.
	Again by the Lemma \ref{lem03}, there is a subsequence $(\Gamma,p)_{n_k}$,
	with invariant measures converging to $\mu_{(T,q)}$ in $\FM{\cdot}$.
	For short,
	let us denote elements of this subsequence again by $(\Gamma,p)_n$,
	the corresponding invariant measures by $\mu_n$,
	and let $Z_n=Z_{(\Gamma,p)_n}$, 
	$O(Z_n,r_n)=O(Z_{(\Gamma,p)_n},r_{(\Gamma,p)_n})$
	for
	$n\in\Nset$.
	
	The obvious inequality $\Lip(l_n^{-1}f_n)\le 1$,
	where $f_n=f_{(\Gamma,p)_n}$ and $l_n=l_{(\Gamma,p)_n}$,
	implies that
	\[
	\left|
	\int_{[0,1]}f_n(x)\mu_n(dx)
	-
	\int_{[0,1]}f_n(x)\mu_{(T,q)}(dx)
	\right|
	\le l_n\FM{\mu_n-\mu_{(T,q)}}
	< \frac{1}{2n}.
	\]
	However, since $f_n(x)=1$ for $x\in Z_n$
	and $f_n(x)=0$ for $x\not\in O(Z_n,r_n)$,
	\[
	\mu_n(Z_n) - \mu_{(T,q)}(O( Z_n,r_n )) < \frac{1}{2n}.
	\]
	Thus 
	\[
		\mu_{(T,q)}( O(Z_n,r_n) )
		>
		\mu_n(Z_n)-\frac{1}{2n}
		>
		1
		-\frac{1}{2n}
		-\frac{1}{2n}
		=
		1
		-\frac{1}{n}.
	\]
	This and the inequality $m( O(Z_n,r_n) )<\frac1n$, $n\in\Nset$, prove that $\mu_{(T,q)}$
	is singular. The proof is completed.
\end{proof}

\end{document}